\renewcommand{\phi}[0]{\varphi}
\renewcommand{\theta}[0]{\vartheta}
\renewcommand{\epsilon}[0]{\varepsilon}
\newcommand{\Z}{\text{$\mathbf{Z}$}}
\newcommand{\Pro}{\text{$\mathbf{P}^1$}}
\newcommand{\F}{\text{$\mathbf{F}$}}
\newtheorem{theorem}{Theorem}[section]
\newtheorem{lemma}[theorem]{Lemma}
\newtheorem{corollary}[theorem]{Corollary}
\theoremstyle{definition}
\newtheorem{example}[theorem]{Example}
\theoremstyle{remark}
\newtheorem{remark}[theorem]{Remark}
\numberwithin{equation}{section}
\begin{document}

\bibliographystyle{amsplain}

\date{}

\title[]
{Sequences of irreducible polynomials over odd prime fields via elliptic curve endomorphisms, II}

\author{S.~Ugolini}
\email{sugolini@gmail.com} 

\begin{abstract}
In this paper we extend a previous investigation by us regarding an iterative construction of irreducible polynomials over finite fields of odd characteristic. In particular,  we show how it is possible to iteratively construct irreducible polynomials by means of two families of transforms,  which we call the $Q_k$ and $\hat{Q}_k$-transforms, related to certain degree two isogenies over elliptic curves, which split the multiplication-by-$2$ map.  
\end{abstract}

\maketitle

\section{Introduction}
Inspired by the $Q$-transform and the $R$-transform (see \cite{coh}, \cite{mey}), in \cite{SOPVE} we defined the $Q_k$-transforms over any finite field of odd characteristic as follows. 

If $p$ is an odd prime, $q$ is a power of $p$ and $k \in \F_p^*$, then the $Q_k$-transform takes any polynomial $f \in \F_p [x]$ of positive degree $n$ to 
\begin{equation*}
f^{Q_k} (x) = \left( \frac{x}{k} \right)^n \cdot f(\theta_{k}(x)),
\end{equation*} 
where $\theta_{k}$ is the map which takes any element $x \in \Pro (\F_q)  = \F_q \cup \{ \infty \}$ to
\begin{displaymath}
\theta_{k} (x) = 
\begin{cases}
\infty & \text{if $x= 0$ or $\infty$,}\\
k \cdot (x+x^{-1}) & \text{otherwise}.
\end{cases}
\end{displaymath}

In \cite{SOPVE} we showed how one can construct sequences of irreducible polynomials over finite fields by repeated applications of the $Q_k$-transforms when $k$ and $p$ fall into one of the following cases:
\begin{itemize}
\item $k \equiv \pm  \frac{1}{2} \mod p$;
\item $k $ is a root of $x^2 + \frac{1}{4}$ and $p \equiv 1 \pmod{4}$;
\item $k$ or $-k$  is a root of $x^2 + \frac{1}{2} x + \frac{1}{2}$ and $p \equiv 1, 2$, or $4 \pmod{7}$.
\end{itemize}

Since the dynamics of the maps $\theta_k$ seems to be chaotic for any $k$ different from the aforementioned values, in this paper we illustrate an iterative construction of irreducible polynomials, which is independent of the characteristic of the field and employs two families of transforms, namely the $Q_k$-transforms and the $\hat{Q}_k$-transforms, which are below introduced.

We notice that if $k$ is a quadratic residue in $\F_p^*$ and $\alpha_k$ is a square root of $k^3$ in $\F_p^*$, then the map $\theta_k$ is involved in the definition of the isogeny
\begin{eqnarray*}
\psi_k (x,y)  & = &  \left(\theta_k(x),  \alpha_k \cdot \frac{x^2 y - y}{x^2} \right)
\end{eqnarray*}
from the elliptic curve 
\begin{displaymath}
\begin{array}{lll}
E & : &  y^2 = x^3 + x
\end{array}
\end{displaymath}
to the elliptic curve
\begin{displaymath}
\begin{array}{lll}
{E}_k & : & y^2 = x^3 - 4 k^2 x.
\end{array}
\end{displaymath} 

Consider now the map $\hat{\theta}_k$ which takes any element $x \in \Pro (\F_q) $ to
\begin{displaymath}
\hat{\theta}_{k} (x) = 
\begin{cases}
\infty & \text{if $x= 0$ or $\infty$,}\\
\frac{x^2-4k^2}{4kx} & \text{otherwise,}
\end{cases}
\end{displaymath}
and the $\hat{Q}_k$-transform, which takes any polynomial $f \in \F_p [x]$ of positive degree $n$ to 
\begin{equation*}
f^{\hat{Q}_k} (x) = \left( 4 k x \right)^n \cdot f(\hat{\theta}_{k}(x)).
\end{equation*} 
The map $\hat{\theta}_{k}$ is involved in the definition of the isogeny
\begin{eqnarray*}
\hat{\psi}_k (x,y) & = & \left(\hat{\theta}_k (x),  \frac{y(x^2+4k^2)}{8 \alpha_k x^2} \right)
\end{eqnarray*}
from $E_k$ to $E$, namely the dual isogeny of $\psi_k$.
If we denote by $[2]$ the duplication map on $E$, then 
\begin{displaymath} 
\begin{array}{lll}
[2] & = &  \hat{\psi}_k \circ \psi_k.
\end{array}
\end{displaymath}

While the isogenies $\psi_k$ and $\hat{\psi}_k$ have been defined only for the quadratic residues $k$ in $\F_p^*$, the construction of sequences of irreducible polynomials, which is described in Section \ref{sec_const}, can be carried over to any $k \in \F_p^*$, as explained in Remark \ref{any_k_is_ok}.

\section{Preliminaries}\label{sec_prel}
Let $\F_q$ be a finite field of odd characteristic $p$. 

The structure of the group $E(\F_{q})$ of rational points  of $E$ over $\F_{q}$ depends upon $p$. In fact, if $p \equiv 1 \pmod{4}$, then $E$ is an ordinary elliptic curve, while $E$ is supersingular if $p \equiv 3 \pmod{4}$ (see \cite[Proposition 4.37]{was}).

Whichever $p$ is, we can  consider the map $[\tilde{2}]$ defined over $\Pro(\overline{\F}_q)$  as
\begin{displaymath}
[\tilde{2}] : x \mapsto 
\begin{cases}
\infty & \text{if $x \in \{0, i_p, -i_p, \infty \}$,}\\
\frac{x^4-2x^2+1}{4(x^3+x)} & \text{otherwise,}
\end{cases}
\end{displaymath}
where $i_p$ is a square root of $-1$ in $\overline{\F}_p$.

For any $x \in \Pro (\overline{\F}_q)$ and any quadratic residue $k \in \F_p^*$ we have that
\begin{equation}\label{2_is_theta}
[\tilde{2}] (x) =  \hat{\theta}_k (\theta_k(x)).
\end{equation}

\begin{remark}\label{any_k_is_ok}
While in the current section $k$ is assumed to be a quadratic residue in $\F_p^*$, we notice that $[\tilde{2}] = \hat{\theta}_k \circ \theta_k$ whichever $k \in \F_p^*$ we take. This fact will let us to extend our iterative construction of irreducible polynomials in Section \ref{sec_const} to any $k$.
\end{remark}

We can construct the functional graph $G^{q}_{[\tilde{2}]}$ of $[\tilde{2}]$ over $\Pro (\F_q)$, where the vertices are the elements of $\Pro (\F_q)$ and an arrow joins a vertex $\alpha$ to a vertex $\beta$ if $\beta =  [\tilde{2}] (\alpha)$. Since any vertex of $G^{q}_{[\tilde{2}]}$ is either $[\tilde{2}]$-periodic or preperiodic, any connected  component  of $G^{q}_{[\tilde{2}]}$ contains exactly one cycle, whose vertices are roots of reversed trees.
In the following, for any non-negative integer $i$ and any $[\tilde{2}]$-periodic element $x_0 \in \Pro(\F_q)$, we denote by $T^{q^{2^i}}_{[\tilde{2}]} (x_0)$ the reversed tree of $G^{q^{2^i}}_{[\tilde{2}]}$ rooted in $x_0$. 

The following holds.

\begin{lemma}\label{counterimages}
If $\tilde{x} \in \Pro (\overline{\F}_q)$, then
\begin{displaymath}
|\{x \in \Pro(\overline{\F}_q) : [\tilde{2}] (x) = \tilde{x} \}| =
\begin{cases}
2 & \text{if $\tilde{x} \in \{\pm i_p, 0 \}$},\\
4 & \text{otherwise.}
\end{cases}
\end{displaymath}
\end{lemma}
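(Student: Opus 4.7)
My plan is to count preimages by exploiting the factorization $[\tilde{2}] = \hat{\theta}_k \circ \theta_k$ recorded in (\ref{2_is_theta}), taking $k$ to be any quadratic residue in $\F_p^*$. Since $\theta_k$ and $\hat{\theta}_k$ are both degree-$2$ rational maps on $\Pro(\overline{\F}_q)$, the map $[\tilde{2}]$ has degree $4$, and the count of preimages reduces to a ramification analysis of the two degree-$2$ factors.

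First I would observe that the fibers of $\theta_k$ are exactly the pairs $\{x, x^{-1}\}$, so $\theta_k$ is $2$-to-$1$ outside the ramification points $x = \pm 1$, which are sent to $\pm 2k$; in particular the fiber over $\infty$ is the two-element set $\{0, \infty\}$. Likewise, the fibers of $\hat{\theta}_k$ are the pairs $\{x, -4k^2/x\}$, collapsing only at $x = \pm 2k i_p$, which are sent to $\pm i_p$.

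The case analysis for $|[\tilde{2}]^{-1}(\tilde{x})|$ then proceeds as follows. If $\tilde{x} = \pm i_p$, then $\hat{\theta}_k^{-1}(\tilde{x})$ consists of the single point $\pm 2k i_p$, and since $i_p^2 = -1 \neq 1$ in odd characteristic this point differs from $\pm 2k$, so it lifts to two distinct points under $\theta_k$, yielding $2$ preimages. If $\tilde{x} = 0$, then $\hat{\theta}_k^{-1}(0) = \{\pm 2k\}$ coincides with the two ramification values of $\theta_k$, each having a unique $\theta_k$-preimage ($1$ and $-1$ respectively), again yielding $2$. In every other case the two elements of $\hat{\theta}_k^{-1}(\tilde{x})$ avoid $\{\pm 2k\}$, so each contributes $2$ distinct $\theta_k$-preimages, and these four preimages cannot coincide across the two fibers (since $\theta_k$ is a function), for a total of $4$.

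I do not foresee any serious obstacle: the only points that need mild care are the distinctness of the four generic preimages and the boundary case $\tilde{x} = \infty$, which from the piecewise definition of $[\tilde{2}]$ has the preimage set $\{0, \pm i_p, \infty\}$, consistent with the generic count of $4$. This compositional approach is cleaner than tackling the quartic $(x^2-1)^2 = 4\tilde{x}\,x(x^2+1)$ directly, since the ramification of each degree-$2$ factor is immediate from inspection of its fiber structure.
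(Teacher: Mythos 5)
Your proposal is correct and follows essentially the same route as the paper: both decompose $[\tilde{2}]$ as $\hat{\theta}_k \circ \theta_k$, record that each factor is $2$-to-$1$ except over $\{0\}$ (resp.\ $\{\pm i_p\}$) with ramification points $\pm 2k$ (resp.\ $\pm 2k i_p$), and run the same three-way case analysis. Your version merely makes the fiber descriptions $\{x, x^{-1}\}$ and $\{x, -4k^2/x\}$ and the check at $\tilde{x} = \infty$ more explicit than the paper does.
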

\begin{proof}
We introduce the following notations for any $x_0 \in \Pro(\overline{\F}_q)$:
\begin{eqnarray*}
\theta^{-1}_k \{ x_0 \} & = & \{x \in \Pro(\overline{\F}_q) : \theta_k (x) = x_0 \};\\
\hat{\theta}^{-1}_k \{ x_0 \} & = & \{x \in \Pro(\overline{\F}_q) : \hat{\theta}_k (x) = x_0 \};\\
\left[\tilde{2} \right]^{-1} \{ x_0 \} & = & \{x \in \Pro(\overline{\F}_q) : [\tilde{2}] (x) = x_0 \}.
\end{eqnarray*}

We have that
\begin{displaymath}
\begin{array}{rcl}
|\theta^{-1}_k \{ x_0 \}| & = &
\begin{cases}
1 & \text{if $x_0 \in \{ \pm 2k \}$,}\\
2 & \text{otherwise,}
\end{cases}
\\[4ex]
|\hat{\theta}^{-1}_k \{ x_0 \}| & = &
\begin{cases}
1 & \text{if $x_0 \in \{ \pm i_p \}$,}\\
2 & \text{otherwise.}
\end{cases}
\end{array}
\end{displaymath}

Moreover,
\begin{displaymath}
\begin{array}{rcl}
\hat{\theta}_k \{ \pm 2 k i_p \} & = & \{ \pm i_p \},\\
\hat{\theta}_k \{ \pm 2 k \} & = & \{ 0 \}.
\end{array}
\end{displaymath}

We can now analyse the different cases.
\begin{itemize}
\item If $\tilde{x} \in \Pro(\F_q) \backslash \{ \pm i_p, 0 \}$, then $\hat{\theta}^{-1}_k \{ \tilde{x} \} = \{x_1, x_2\}$,
where $\{x_1, x_2\} \cap \{\pm 2 k \} = \emptyset$. Therefore, $\left| \left[\tilde{2} \right]^{-1} \{ \tilde{x} \} \right| =4$. 
\item If  $\tilde{x} \in \{\pm i_p \}$, then $\hat{\theta}^{-1}_k \{ \tilde{x} \} = \{ 2 k \tilde{x} \}$. Therefore, $\left| \left[\tilde{2} \right]^{-1} \{ \tilde{x} \} \right| = 2$. 
\item If  $\tilde{x} = 0$, then $\hat{\theta}^{-1}_k \{ \tilde{x} \} = \{ \pm 2 k \}$. Therefore, $\left| \left[\tilde{2} \right]^{-1} \{ \tilde{x} \} \right| = 2$.
\end{itemize}

All considered, the result follows.
\end{proof}

According to Lemma \ref{counterimages} the following holds. 
\begin{corollary}
Let $x_0 \in \Pro (\F_q)$ be $[\tilde{2}]$-periodic. Then, $T^{q^{2^i}}_{[\tilde{2}]} (x_0)$ is a $4$-ary tree for any non-negative integer $i$.
\end{corollary}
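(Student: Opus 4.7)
The plan is to derive the corollary as an immediate consequence of Lemma \ref{counterimages}, since nothing deeper than counting preimages is required.

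First I would recall the construction of the reversed tree: for a $[\tilde{2}]$-periodic root $x_0 \in \Pro(\F_q) \subseteq \Pro(\F_{q^{2^i}})$, the tree $T^{q^{2^i}}_{[\tilde{2}]}(x_0)$ has as its vertices $x_0$ together with all strictly preperiodic $\alpha \in \Pro(\F_{q^{2^i}})$ whose forward orbit under $[\tilde{2}]$ eventually reaches $x_0$, with an edge from $\beta$ to $\alpha$ whenever $[\tilde{2}](\alpha) = \beta$. In particular, for any vertex $\beta$ of the tree, its children form a subset of
\begin{displaymath}
\left[\tilde{2}\right]^{-1}\{\beta\} \cap \Pro(\F_{q^{2^i}}).
\end{displaymath}
For the root $x_0$ one must moreover remove the unique predecessor of $x_0$ on its cycle, which is itself periodic and therefore does not belong to the reversed tree.

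Next I would invoke Lemma \ref{counterimages}, which gives $\left|\left[\tilde{2}\right]^{-1}\{\beta\}\right| \le 4$ already in $\Pro(\overline{\F}_q)$, the extreme values $2$ being attained only at $\beta \in \{\pm i_p, 0\}$. Intersecting with the subfield $\Pro(\F_{q^{2^i}})$ can only shrink this set, so the out-degree of every vertex of $T^{q^{2^i}}_{[\tilde{2}]}(x_0)$ is bounded above by $4$. This is precisely the definition of a $4$-ary tree, and the corollary follows.

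There is no real obstacle: the statement is purely a cardinality bound, and the work has already been done in the preceding lemma. The only point that deserves a short sentence of justification is the handling of the root $x_0$, for which one preimage (the cycle predecessor) is excluded from the tree, leaving at most $4$ children as required.
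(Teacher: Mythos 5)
Your proposal is correct and follows the same route as the paper, which simply derives the corollary from Lemma \ref{counterimages} without further argument: the preimage count of at most $4$ bounds the number of children of every vertex. Your extra remark about excluding the periodic cycle-predecessor of the root $x_0$ is a sensible (and correct) clarification that the paper leaves implicit.
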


\begin{example}\label{exm_49}
Below  is represented the graph $G_{[\tilde{2}]}^{49}$. As regards the labels of the nodes, `0' is the zero in $\F_{49}$, while all the other labels different from $\infty$ refer to the exponents of the powers $\alpha^i$, being $\alpha$ a generator  of the field $\F_{49}$. We notice that every node, which is not a leaf, has exactly $4$ children, except for $12$, $36$ and `0'. This fact is in accordance with Lemma \ref{counterimages}, since $(\alpha^{12})^2 = (\alpha^{36})^{2} = -1$.

\begin{center}
    \unitlength=2.6pt
    \begin{picture}(130, 130)(-65,-65)
    \gasset{Nw=6,Nh=6,Nmr=3,curvedepth=0}
    \thinlines
   
	\node(O)(0,0){$\infty$}   
   
    \node(A1)(20,0){$12$}
    \node(A2)(-10,14){$36$}
    \node(A3)(-10,-14){`0'}
    
    \node(A10)(34.7,20){$4$}
    \node(A20)(0,40){$28$}
    \node(A21)(-34.7,20){$20$}  
    \node(A30)(-34.7,-20){$24$}
    \node(A31)(0,-40){$0$}
    \node(A11)(34.7,-20){$44$}

    \drawedge(A1,O){}
    \drawedge(A2,O){}
    \drawedge(A3,O){}
    
    \node(A100)(59.49,7.8){$7$}
    \node(A101)(55.4,22.96){$23$}
    \node(A102)(47.6,36.53){$25$}  
    \node(A103)(36.53,47.6){$41$}
    
    \node(A200)(22.96,55.4){$1$}
    \node(A201)(7.8,59.49){$17$}
    \node(A202)(-7.8,59.49){$31$}  
    \node(A203)(-22.96,55.4){$47$}
    
    \node(A210)(-36.53,47.6){$3$}
    \node(A211)(-47.6,36.53){$9$}
    \node(A212)(-55.4,22.96){$39$}  
    \node(A213)(-59.49,7.8){$45$}
    
    \node(A300)(-36.53,-47.6){$16$}
    \node(A301)(-47.6,-36.53){$18$}
    \node(A302)(-55.4,-22.96){$30$}  
    \node(A303)(-59.49,-7.8){$32$}
    
    \node(A310)(22.96,-55.4){$6$}
    \node(A311)(7.8,-59.49){$8$}
    \node(A312)(-7.8,-59.49){$40$}  
    \node(A313)(-22.96,-55.4){$42$}
    
    \node(A110)(59.49,-7.8){$15$}
    \node(A111)(55.4,-22.96){$21$}
    \node(A112)(47.6,-36.53){$27$}  
    \node(A113)(36.53,-47.6){$33$}

    \drawedge(A10,A1){}
    \drawedge(A11,A1){}
    \drawedge(A20,A2){}
    \drawedge(A21,A2){}
    \drawedge(A30,A3){}
    \drawedge(A31,A3){}
    
    \drawedge(A100,A10){}
    \drawedge(A101,A10){}
    \drawedge(A102,A10){}
    \drawedge(A103,A10){}
    \drawedge(A110,A11){}
    \drawedge(A111,A11){}
    \drawedge(A112,A11){}
    \drawedge(A113,A11){}
    \drawedge(A200,A20){}
    \drawedge(A201,A20){}
    \drawedge(A202,A20){}
    \drawedge(A203,A20){}
    \drawedge(A210,A21){}
    \drawedge(A211,A21){}
    \drawedge(A212,A21){}
    \drawedge(A213,A21){}
    \drawedge(A300,A30){}
    \drawedge(A301,A30){}
    \drawedge(A302,A30){}
    \drawedge(A303,A30){}
    \drawedge(A310,A31){}
    \drawedge(A311,A31){}
    \drawedge(A312,A31){}
    \drawedge(A313,A31){}
    
    \drawloop[loopangle=180,loopdiam=4](O){}

    \end{picture}
  \end{center}
  
  \begin{center}
    \unitlength=2.6pt
    \begin{picture}(110, 40)(0,-10)
    \gasset{Nw=6,Nh=6,Nmr=3,curvedepth=0}
    \thinlines

    \node(A1)(10,0){$43$}
    \node(A2)(40,0){$37$}
    \node(A3)(70,0){$13$}
    \node(A4)(100,0){$19$}
    
    \node(A10)(0,20){$2$}
    \node(A11)(10,20){$5$}
    \node(A12)(20,20){$46$}
    
    \node(A20)(30,20){$10$}
    \node(A21)(40,20){$11$}
    \node(A22)(50,20){$38$}   
    
    \node(A30)(60,20){$14$}
    \node(A31)(70,20){$34$}
    \node(A32)(80,20){$35$}
    
    \node(A40)(90,20){$22$}
    \node(A41)(100,20){$26$}
    \node(A42)(110,20){$29$} 

    \drawedge(A10,A1){}
    \drawedge(A11,A1){}
    \drawedge(A12,A1){}
    \drawedge(A20,A2){}
    \drawedge(A21,A2){}
    \drawedge(A22,A2){}
    \drawedge(A30,A3){}
    \drawedge(A31,A3){}
	\drawedge(A32,A3){}
	\drawedge(A40,A4){}
    \drawedge(A41,A4){}
	\drawedge(A42,A4){}

    \drawloop[loopangle=-90,loopdiam=4](A1){}
    \drawloop[loopangle=-90,loopdiam=4](A2){}
    \drawloop[loopangle=-90,loopdiam=4](A3){}
    \drawloop[loopangle=-90,loopdiam=4](A4){}
    \end{picture}
  \end{center}
\end{example}

\subsection{The ordinary case}\label{prel_ord}
In \cite[Section 3]{SUk} and in \cite[Section 2.1]{SOPVE}, relying upon \cite{wit}, we studied some properties of the group of rational points of $E$ over a finite field. We summarize the relevant facts for the reader's convenience.

Let $m$ be a positive integer, $l$ a non-negative integer and $q = p^{2^l m}$. If we set $R = \Z[i]$ and denote by $\pi_p$ the representation in $R$ of the Frobenius endomorphism of $E$, then
\begin{equation*}
E(\F_{p^{2^l m}}) \cong R / (\pi_p^{2^l m}-1) R \cong   R / \rho_0^{e_l} R \times R / \rho_1 R,
\end{equation*}
where $\rho_0= 1 + i$, $e_l$ is a non-negative integer which depends on $l$ and $\rho_1$ is an element of $R$ coprime to $\rho_0$ such that $\rho_0^{e_l} \cdot \rho_1 = \pi_p^{2^l m}-1$. According to \cite[Lemma 2.13 (1), (4)]{SOPVE}, the following holds.
\begin{lemma}\label{l_l+1}
We have that
\begin{itemize}
\item $e_l \geq 2$; 
\item $e_l = e_{l-1}+2$, if $l \geq 2$.
\end{itemize}
\end{lemma}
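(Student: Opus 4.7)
My plan is to work $\rho_0$-adically in $R=\Z[i]$ and exploit the algebraic identity $2 = -i\rho_0^2$, which forces squaring to shift $\rho_0$-adic valuations by exactly $2$ except in one collision case. Throughout I let $v = v_{\rho_0}$ denote the $\rho_0$-adic valuation, so that $e_l = v(\pi_p^{2^l m}-1)$ by the stated decomposition of $E(\F_{p^{2^l m}})$.

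For the first assertion I would show $\pi_p \equiv 1 \pmod{\rho_0^2}$. Since $p \equiv 1 \pmod 4$, one can write $\pi_p = a + bi$ with $a$ odd and $b$ even, so $\pi_p - 1 = (a-1) + bi \in 2R = \rho_0^2 R$. Raising to any positive power preserves this congruence, giving $e_l \geq 2$ for every $l \geq 0$. For the recursive part I would set $\pi_p^{2^{l-1}m} - 1 = \rho_0^{e_{l-1}} u$ with $\rho_0 \nmid u$ and expand
\begin{equation*}
\pi_p^{2^l m} - 1 = \rho_0^{e_{l-1}} u \bigl(\rho_0^{e_{l-1}} u + 2\bigr) = \rho_0^{e_{l-1}+2}\, u\, \bigl(\rho_0^{e_{l-1}-2} u - i\bigr).
\end{equation*}
Whenever $e_{l-1} > 2$ the bracketed factor reduces to $-i\pmod{\rho_0}$, a unit, so $v$ of the right-hand side is exactly $e_{l-1}+2$ and $e_l = e_{l-1}+2$ on the nose. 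Hence the desired recursion for $l \geq 2$ reduces to checking the single base inequality $e_1 > 2$.

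This base case is the step I expect to require the most care. The identity above at $l = 1$ immediately gives $e_1 = e_0 + 2 \geq 5$ whenever $e_0 > 2$. In the degenerate case $e_0 = 2$, the two $\rho_0$-powers in the displayed formula coincide: the bracketed factor collapses to $u - i$. Here one uses that $u \equiv 1 \pmod{\rho_0}$ while $1 - i = -i\rho_0 \in \rho_0 R$ (so $i \equiv 1 \pmod{\rho_0}$); therefore $v(u - i) \geq 1$ and $e_1 \geq 5$. Either way $e_1 > 2$, after which a trivial induction closes the argument. This collision analysis at the first doubling is precisely what prevents the recursion from extending down to $l = 1$.
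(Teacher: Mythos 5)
Your proof is correct. Note, however, that the paper itself does not prove this lemma at all: it simply imports it from \cite[Lemma 2.13 (1), (4)]{SOPVE}, so there is no in-paper argument to compare against; what you have written is a self-contained $\rho_0$-adic valuation proof of the cited fact. Your key identity $\pi_p^{2^l m}-1=\rho_0^{e_{l-1}+2}\,u\,(\rho_0^{e_{l-1}-2}u-i)$ is exactly the right mechanism, the collision analysis at $e_{l-1}=2$ correctly explains why the recursion is only asserted for $l\geq 2$, and the numerics check out (e.g.\ for $p=5$, $m=1$ one has $\pi_5=1+2i$, $e_0=2$ and $e_1=5$, matching your bound $e_1\geq 5$). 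The one step you should shore up is the claim that $\pi_p=a+bi$ with $a$ odd and $b$ even: this is not a consequence of $p\equiv 1\pmod 4$ alone, since among the four associates of a Gaussian prime above $p$ two have odd real part and two have odd imaginary part, and for the latter one gets $v_{\rho_0}(\pi_p-1)=1$, not $\geq 2$. The correct justification is that $E[2]\subseteq E(\F_p)$ (the roots $0,\pm i_p$ of $x^3+x$ lie in $\F_p$ precisely because $p\equiv 1\pmod 4$), so $4\mid \#E(\F_p)=p+1-t$ and hence $t\equiv 2\pmod 4$, which forces the real part $a=t/2$ of the Frobenius to be odd; equivalently, $2\mid \pi_p-1$ in $R$, which is all your argument needs.
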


Since $[2] = \hat{\psi}_k \circ \psi_k$ and $2 = - i \cdot \rho_0^2$ in $R$, we can prove the forthcoming result concerning the depth of the trees rooted in $[\tilde{2}]$-periodic elements of $G^q_{[\tilde{2}]}$.

\begin{theorem}\label{t_l+1}
Let $x_0 \in \Pro (\F_q)$ be $[\tilde{2}]$-periodic. Then,
\begin{enumerate}
\item $T^{q^{2}}_{[\tilde{2}]} (x_0)$ has depth $d := \lceil \frac{e_{l+1}}{2} \rceil$ and its leaves have height  at least $d - 1$;
\item the children of the leaves of $T^{q^{2^i}}_{[\tilde{2}]} (x_0)$ in $\Pro (\overline{\F}_q)$ are leaves of $T^{q^{2^{i+1}}}_{[\tilde{2}]} (x_0)$, for any positive integer $i$. 
\end{enumerate}
\end{theorem}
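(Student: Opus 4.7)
The plan is to lift the dynamics on $\Pro$ to the elliptic curve $E$ via the $x$-coordinate map $\pi : E \to \Pro$, $\pi(P) = x(P)$, and to exploit the structure theorem recalled in Section~\ref{prel_ord}. Writing $E(\F_{q^{2^j}}) \cong A^{(j)} \oplus B^{(j)}$ with $A^{(j)} := R/\rho_0^{e_{l+j}} R$ the $2$-primary subgroup and $B^{(j)} := R/\rho_1^{(j)} R$ coprime to $\rho_0$, and using the identity $2 = -i \rho_0^2$ in $R$, the endomorphism $[2]$ acts invertibly on $B^{(j)}$ and as $a \mapsto -i \rho_0^2 a$ on $A^{(j)}$. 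Consequently, the $[2]$-periodic points of $E(\F_{q^{2^j}})$ are exactly $\{0\} \oplus B^{(j)}$, and any $[\tilde{2}]$-periodic $x_0 \in \Pro(\F_q)$ admits a $[2]$-periodic lift $P_0 = (0, p_0) \in E(\F_{q^2})$ with $\pi(P_0) = x_0$.

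For part (1), I would analyse the reversed tree rooted at $P_0$ in the functional graph of $[2]$ on $E(\F_{q^2})$. A direct computation shows that its vertices at height $h$ are precisely the points $(a, (-i\rho_0^2)^{-h} p_0)$ with $\rho_0$-adic valuation $v_{\rho_0}(a) \in \{e_{l+1} - 2h,\, e_{l+1} - 2h + 1\}$ (using the convention $v_{\rho_0}(0) = e_{l+1}$). The largest $h$ for which this set is nonempty inside $[0, e_{l+1}]$ is $d = \lceil e_{l+1}/2 \rceil$, yielding the claimed depth. A vertex $(a, b)$ is a leaf iff $a$ has no $\rho_0^2$-preimage in $A^{(1)}$, that is, iff $v_{\rho_0}(a) \in \{0, 1\}$; intersecting with the height formula then shows that leaves occur only at heights $d$ and (when $e_{l+1}$ is odd) $d-1$, in particular always at height at least $d-1$. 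Because $\pi \circ [2] = [\tilde{2}] \circ \pi$ and negation $P \mapsto -P$ swaps the tree at $P_0$ with the tree at $-P_0$, the map $\pi$ sends the tree at $P_0$ onto $T^{q^2}_{[\tilde{2}]}(x_0)$ in a height-preserving way (and is injective on it except in the trivial case $x_0 = \infty$, which is checked directly), so the depth and leaf-height claims transfer from $E$ to $\Pro$.

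For part (2), Lemma~\ref{l_l+1} gives $e_{l+i+1} = e_{l+i} + 2$ for $i \geq 1$, so the natural inclusion $A^{(i)} \hookrightarrow A^{(i+1)}$ realises $A^{(i)}$ as the subgroup $\rho_0^2 A^{(i+1)}$; in particular an element $a \in A^{(i)}$ has valuation in $A^{(i+1)}$ equal to $v^{(i)}_{\rho_0}(a) + 2$. A leaf of $T^{q^{2^i}}_{[\tilde{2}]}(x_0)$ lifts to a point $(a, b) \in E(\F_{q^{2^i}})$ with $v^{(i)}_{\rho_0}(a) \in \{0, 1\}$, and its children in $\Pro(\overline{\F}_q)$ correspond to the four $[2]$-preimages $(a', b') \in E(\overline{\F}_q)$ of $\pm(a,b)$, characterised by $-i\rho_0^2 a' = a$ and $b' = (-i\rho_0^2)^{-1} b \in B^{(i)}$. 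Since $\rho_0^{e_{l+i+1}} a' = \rho_0^{e_{l+i}} \cdot \rho_0^2 a' = \rho_0^{e_{l+i}} a = 0$, we have $a' \in A^{(i+1)}$, hence $(a', b') \in E(\F_{q^{2^{i+1}}})$; and because $a$ has $A^{(i+1)}$-valuation in $\{2, 3\}$, we obtain $v^{(i+1)}_{\rho_0}(a') \in \{0, 1\}$, which is exactly the leaf condition in $T^{q^{2^{i+1}}}_{[\tilde{2}]}(x_0)$. The main obstacle is bookkeeping: carefully tracking the valuation shift by $+2$ under $A^{(i)} \hookrightarrow A^{(i+1)}$ and verifying that $\pi$ transports heights and leaves faithfully between the two sides, especially at the distinguished point $x_0 = \infty$.
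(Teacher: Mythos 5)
Your proof is correct and follows essentially the same route as the paper's: both lift $x_0$ to a point with trivial $\rho_0$-component in $E(\F_{q^{2^{j}}}) \cong R/\rho_0^{e_{l+j}}R \times R/\rho_1 R$, use $2 = -i\rho_0^2$ to read off the depth and the leaf condition from $\rho_0$-divisibility, and deduce part (2) from $e_{l+i+1} = e_{l+i}+2$ (Lemma~\ref{l_l+1}). Your version is merely more explicit about the valuation bookkeeping and about descending from $E$ to $\Pro$ through the $\pm$-quotient, details the paper leaves implicit.
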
 
\begin{proof}
\begin{enumerate}[leftmargin=*]
\item 
The dynamics of $[\tilde{2}]$ over $\Pro (\F_{q^2})$ can be studied relying upon the iterations of  $[2] = [- i  \rho_0^2]$ in 
\begin{equation*}
R / (\pi_p^{2^{l+1}m}-1) R \cong S =   R / \rho_0^{e_{l+1}} R \times R / \rho_1 R.
\end{equation*}

By hypothesis, $x_0 \in \Pro(\F_q)$. Therefore, either $x_0 =  \infty$ or $(x_0, y_0) \in E (\F_{q^2})$, for some $y_0 \in \F_{q^2}$. In both cases, the corresponding point $Q$ in $S$ is of the form $Q = (0, Q_1)$. 

Consider the point $P = ([1], [2]^{-d}  Q_1) \in S$. Then, $[2]^{d} P = Q$, while $[2]^{h} P \not = Q$ for any positive integer $h < d$. 
More in general, if $(P_0, P_1) \in S$, then $[2]^{d} P_0 = 0$ in $R / \rho_0^{e_{l+1}} R$. Hence, $T^{q^{2}}_{[\tilde{2}]} (x_0)$ has depth $d$.

Let now $\tilde{x}$ be a leaf of $T^{q^{2}}_{[\tilde{2}]} (x_0)$. Suppose that $P = (P_0, P_1)$ is the point in $S$ having such a $x$-coordinate and that $P_0 = [a]$ for some $a \in R$. Then, $\rho_0^2 \nmid a$. Indeed, if $a = \rho_0^2 c$ for some $c \in R$, then we could take the point $\tilde{P} = ([i c] , [2]^{-1} P_1)$ and notice that $[2] \tilde{P} = P$, which is absurd, since $\tilde{x}$ is a leaf of the tree.  Consequently, if $[2]^h P_0 = 0$ for some positive integer $h$, then $h \geq \lceil \frac{e_{l+1}}{2} \rceil - 1$.

\item Consider a leaf $\tilde{x}$ of $T^{q^{2^i}}_{[\tilde{2}]} (x_0)$, for some positive integer $i$. Let $x'$ be one of the direct predecessors of $\tilde{x}$ in $T^{q^{2^{i+1}}}_{[\tilde{2}]} (x_0)$. Since the greatest power of $\rho_0$ which divides $\pi_p^{2^{l+i+1} m}-1$ is $e_{l+i+1}$ and
$e_{l+i+1} = e_{l+i}+2$ according to Lemma \ref{l_l+1}, we have that $x'$ is a leaf of $T^{q^{2^{i+1}}}_{[\tilde{2}]} (x_0)$.
\end{enumerate}
\end{proof}

\subsection{The supersingular case}
Let $i$ and $m$ be two positive integers. Then, according to \cite[Theorem 4.1]{wit},
\begin{equation*}
E(\F_{p^{2^{i} m}}) \cong \Z / ((-p)^{2^{i-1} m}-1) \Z \times  \Z / ((-p)^{2^{i-1} m}-1) \Z.
\end{equation*}
The following holds.

\begin{lemma}\label{super_s_lemma}
There exist two positive integers $e_{i-1}$ and $e_{i}$ and two odd  integers $r$ and $s$ such that
\begin{displaymath}
\begin{array}{lcl}
(-p)^{2^{i-1} m}-1 & = & 2^{e_{i-1}} \cdot r,\\
(-p)^{2^{i} m}-1 & = & 2^{e_{i}} \cdot s.
\end{array}
\end{displaymath}
Moreover, $e_{i} = e_{i-1} +1$.
\end{lemma}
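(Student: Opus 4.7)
The plan is to first observe that since $p$ is odd, $(-p)^{N}$ is odd for every positive integer $N$, and so both $(-p)^{2^{i-1}m}-1$ and $(-p)^{2^{i}m}-1$ are positive even integers; writing them as $2^{e_{i-1}}\cdot r$ and $2^{e_{i}}\cdot s$ with $r,s$ odd therefore yields the existence of the claimed decompositions at once, together with $e_{i-1},e_{i}\geq 1$. The substantive part of the lemma is the identity $e_{i}=e_{i-1}+1$.

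Set $a:=(-p)^{2^{i-1}m}$, so that
\[
(-p)^{2^{i}m}-1 \;=\; a^{2}-1 \;=\; (a-1)(a+1).
\]
Since $a$ is odd, the integers $a-1$ and $a+1$ are consecutive even integers, so exactly one of them has $2$-adic valuation equal to $1$ and the other has $2$-adic valuation at least $2$. Writing $v_{2}$ for the $2$-adic valuation, this gives
\[
e_{i} \;=\; v_{2}(a-1)+v_{2}(a+1) \;=\; e_{i-1}+v_{2}(a+1),
\]
so it suffices to prove that $v_{2}(a+1)=1$, i.e.\ that $a\equiv 1\pmod{4}$.

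I would verify this in cases according to the parity of the exponent $2^{i-1}m$. If $i\geq 2$, or if $i=1$ and $m$ is even, then $2^{i-1}m$ is even and $a=p^{2^{i-1}m}$ is the square of an odd integer, hence $a\equiv 1\pmod{8}$. If $i=1$ and $m$ is odd, then $a=-p^{m}$, and here one invokes the standing hypothesis of this subsection that $E$ is supersingular, which forces $p\equiv 3\pmod{4}$; consequently $p^{m}\equiv 3\pmod{4}$ for odd $m$, so that $a\equiv -3\equiv 1\pmod{4}$. In every case $v_{2}(a+1)=1$, which yields the claim.

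The only mild obstacle is the last sub-case: the congruence $a\equiv 1\pmod{4}$ genuinely depends on $p\bmod 4$, and this is the precise point at which the supersingularity hypothesis enters, distinguishing the argument from the analogous statement in the ordinary case.
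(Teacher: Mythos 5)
Your proof is correct and follows essentially the same route as the paper: both factor $(-p)^{2^{i}m}-1=(a-1)(a+1)$ with $a=(-p)^{2^{i-1}m}$ and reduce the claim to $a\equiv 1\pmod 4$, which forces $v_2(a+1)=1$. The only cosmetic difference is that the paper gets $a\equiv 1\pmod 4$ in one stroke from $-p\equiv 1\pmod 4$ (so every power of $-p$ is $\equiv 1\pmod 4$), whereas you split into cases on the parity of the exponent; this is harmless.
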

\begin{proof}
Since $-p \equiv 1 \pmod{4}$, we have that
\begin{eqnarray*}
(-p)^{2^{i-1} m}  - 1 & \equiv & 0 \pmod{4},\\
(-p)^{2^{i-1} m} + 1 & \equiv & 2 \pmod{4}.
\end{eqnarray*}
Therefore, 
\begin{eqnarray*}
(-p)^{2^{i-1} m}-1  & = & 2^{e_{i-1}} \cdot r,\\
(-p)^{2^{i-1} m}+1  & = & 2 \cdot r',
\end{eqnarray*}
for some integer $e_{i-1} \geq 2$ and some odd integers $r$ and $r'$. Hence,
\begin{equation*}
(-p)^{2^{i} m}-1 = ((-p)^{2^{i-1} m}-1) \cdot ((-p)^{2^{i-1} m}+1) = 2^{e_{i-1}+1} \cdot r \cdot r'.
\end{equation*}
The result follows setting $e_{i} = e_{i-1}+1$ and $s = r \cdot r'$.
\end{proof}

According to Lemma \ref{super_s_lemma}, if we set $q = p^{m}$, then
\begin{equation*}
E(\F_{q^{2^{i}}}) \cong S_{i} = (\Z / 2^{e_{i-1}} \Z \times \Z / r \Z)^2. 
\end{equation*}
The following holds.

\begin{theorem}\label{t_l+1_super}
Let $x_0 \in \Pro (\F_q)$ be $[\tilde{2}]$-periodic. Then 
\begin{enumerate}
\item $T^{q^{2^i}}_{[\tilde{2}]} (x_0)$ has depth $e_{i-1}$;
\item the children of the leaves of $T^{q^{2^i}}_{[\tilde{2}]} (x_0)$ in $\Pro (\overline{\F}_q)$ are leaves of $T^{q^{2^{i+1}}}_{[\tilde{2}]} (x_0)$. 
\end{enumerate}
\end{theorem}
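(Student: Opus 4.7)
The plan is to mirror the proof of Theorem~\ref{t_l+1} in the supersingular setting. Via the isomorphism $E(\F_{q^{2^i}}) \cong S_i = (\Z/2^{e_{i-1}}\Z \times \Z/r\Z)^2$ from Lemma~\ref{super_s_lemma}, I translate the $[\tilde 2]$-dynamics on $\Pro(\F_{q^{2^i}})$ into the doubling dynamics on $S_i$. Let $Q \in S_i$ be the point with $x$-coordinate $x_0$. Since $x_0$ is $[\tilde 2]$-periodic, $[2]^n Q = \pm Q$ for some $n$, and as $2^n \pm 1$ is odd (hence invertible on the $2$-Sylow) the $2$-primary components of $Q$ vanish, so $Q$ lies in the ``odd part'' $(\Z/r\Z)^2$.

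For part~(1), the upper bound is immediate: the $2$-Sylow of $S_i$ has exponent $2^{e_{i-1}}$, so $[2]^{e_{i-1}} P$ has trivial $2$-part for every $P \in S_i$, forcing every vertex of $T^{q^{2^i}}_{[\tilde 2]}(x_0)$ to sit at height at most $e_{i-1}$. For the lower bound I would pick $P \in S_i$ whose $2$-part is $(1,0) \in (\Z/2^{e_{i-1}}\Z)^2$ and whose odd part is $[2]^{-e_{i-1}} Q$; then $[2]^{e_{i-1}} P = Q$ while $[2]^h P \ne \pm Q$ for $0 \le h < e_{i-1}$, so $x(P)$ has height exactly $e_{i-1}$. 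The vertex $x(P)$ is also a leaf: any $P' \in S_i$ with $[2]P' = \pm P$ would need to halve $(\pm 1, 0)$ in the $2$-part, which is impossible since $\pm 1 \notin 2(\Z/2^{e_{i-1}}\Z)$.

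For part~(2), part~(1) applied at level $i+1$ gives that $T^{q^{2^{i+1}}}_{[\tilde 2]}(x_0)$ has depth $e_i = e_{i-1}+1$. Let $\tilde x = x(P)$ be a leaf of $T^{q^{2^i}}_{[\tilde 2]}(x_0)$ with $P \in S_i$, and $\tilde y = x(P')$ one of its children in $\Pro(\overline{\F}_q)$, so $[2]P' = \pm P$. The key fact is that the image of $[2]$ on $E[2^{e_i}] \cong (\Z/2^{e_i}\Z)^2$ equals $E[2^{e_{i-1}}]$ (using $e_i = e_{i-1}+1$), so $P$ is divisible by $2$ inside $E[2^{e_i}] \subset S_{i+1}$, and all four choices of $P'$ lie in $S_{i+1}$; in particular $\tilde y \in \Pro(\F_{q^{2^{i+1}}})$. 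As a child of a depth-$e_{i-1}$ vertex, $\tilde y$ has depth $e_{i-1}+1 = e_i$, which equals the total depth of $T^{q^{2^{i+1}}}_{[\tilde 2]}(x_0)$; hence $\tilde y$ must be a leaf.

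The main obstacle I foresee is justifying that every vertex of the trees really comes from a point in the relevant $S_j$, so as to rule out an ``exotic'' vertex $x(R) \in \F_{q^{2^j}}$ with $R \in E(\overline{\F}_q) \setminus E(\F_{q^{2^j}})$. Such an $R$ would satisfy $(-p)^{2^{j-1}m} R = -R$ (since the $q^{2^j}$-Frobenius fixes $x(R)$ without fixing $R$), placing $R$ in $E[(-p)^{2^{j-1}m}+1] = E[2r']$ where $r'$ is as in Lemma~\ref{super_s_lemma}. Writing $R = T + R_1$ with $T \in E[2]$ and $R_1 \in E[r']$, one checks that $[2]^h R = 2^h R_1$ for $h \ge 1$ always remains in $E[r']$, which meets $E[r]$ (where the cycle of $Q$ lives) only in $0$; consequently the orbit of $R$ can enter the tree rooted at $x_0$ only via the degenerate case $R \in E[2] \subset S_j$, so no genuinely exotic vertex occurs.
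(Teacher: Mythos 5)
Your proof is correct and follows essentially the same route as the paper: both translate the $[\tilde{2}]$-dynamics into doubling on $S_i$, identify the depth with the exponent $e_{i-1}$ of the $2$-Sylow subgroup (leaves being exactly the points whose $2$-part is not divisible by $2$), and deduce part (2) from $e_i = e_{i-1}+1$. The extra verifications you supply --- that the children are rational over $\F_{q^{2^{i+1}}}$ and that no vertex arises from a point defined only over a quadratic extension --- are details the paper leaves implicit, but they do not alter the argument.
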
 
\begin{proof}
\begin{enumerate}[leftmargin=*]
\item
Since $x_0$ is $[\tilde{2}]$-periodic in $\Pro (\F_q)$, it is the $x$-coordinate of a rational point in $E(\F_{q^{2^i}})$, which corresponds to a point in $S_i$ of the form
\begin{equation*}
([0], [a_r], [0], [b_r])
\end{equation*} 
for some integers $a_r$ and $b_r$. We notice that $e_{i-1}$ is the smallest positive integer $k$ such that $[2]^k [c] = [0]$ for any $[c]$ in $\Z / 2^{e_{i-1}} \Z$. Indeed, any leaf of $T_{[\tilde{2}]}^{q^{2^i}} (x_0)$ is the $x$-coordinate of a point 
\begin{equation*}
([a_2], [2]^{-e_{i-1}}[a_r], [b_2], [2]^{-e_{i-1}} [b_r])
\end{equation*} 
in $S_{i}$ for some integers $a_2$ and $b_2$ which are not both divisible by $2$ in $\Z$. Therefore, the tree $T^{q^{2^i}}_{[\tilde{2}]} (x_0)$ has depth $e_{i-1}$ and, in analogy, $T^{q^{2^{i+1}}}_{[\tilde{2}]} (x_0)$ has depth $e_{i}$.
\item Any leaf of $T^{q^{2^i}}_{[\tilde{2}]} (x_0)$ lies on the level $e_{i-1}$ of the tree $T^{q^{2^{i+1}}}_{[\tilde{2}]} (x_0)$, which has depth $e_i$. Consequently, its children are leaves of $T^{q^{2^{i+1}}}_{[\tilde{2}]} (x_0)$.
\end{enumerate}
\end{proof}

\begin{example}
Let $q = 7^2$. Then,
\begin{equation*}
E(\F_{7^{2}}) \cong  \Z / (-8) \Z \times \Z / (-8) \Z. 
\end{equation*}
According to Theorem \ref{t_l+1_super}, any $[\tilde{2}]$-periodic element $x_0 \in \Pro (\F_{49})$ is root of a tree having depth $3$. This is the case of $\infty$, as we can see in Example \ref{exm_49}.
\end{example}

\section{Constructing sequences of irreducible polynomials}\label{sec_const}
Let $f$ be a monic irreducible polynomial of positive degree $n$ belonging to $\F_p[x]$, for some odd prime $p$, and set $q=p^n$. For a fixed $k \in \F_p^*$ we can construct two sequences $\{g_i \}_{i \geq 0}$ and $\{h_i \}_{i \geq 0}$ of monic irreducible polynomials as follows. 
\begin{itemize}
\item We set $g_0:=f$ and $h_0:=f$. 
\item We set $g_1 := f^{\hat{Q}_k}$ and $h_1 := f^{\hat{Q}_k}$, if $f^{\hat{Q}_k}$ is irreducible. Otherwise, we set $g_1$ equal to one of the two monic irreducible factors of $f^{\hat{Q}_k}$ and $h_1$ equal to the other factor. 
\item For any positive integer $i$ we set 
$g_i$ (resp. $h_i$) equal to 
\begin{itemize}
\item one of the monic irreducible factors of  $g_{i-1}^{\hat{Q}_k}$ (resp.  $h_{i-1}^{\hat{Q}_k}$), if $i$ is odd;
\item one of the monic irreducible factors of  $g_{i-1}^{Q_k}$ (resp.  $h_{i-1}^{{Q}_k}$), if $i$ is even. 
\end{itemize} 
\end{itemize}  
\begin{remark}
We notice in passing that if $\tilde{f} \in \F_p [x]$ is irreducible of degree $m$, then either $\tilde{f}^{Q_k}$ (resp. $\tilde{f}^{\hat{Q}_k}$) is  irreducible of degree $2m$, or it splits into the product of two irreducible factors of degree $m$. Indeed, if $\alpha$ is a root of  $\tilde{f}^{Q_k}$ (resp. $\tilde{f}^{\hat{Q}_k}$), then $f(\theta_k(\alpha))= 0 $ (resp. $f(\hat{\theta}_k (\alpha))=0$). Hence, either $\alpha$ has degree $2m$ or it has degree $m$ over $\F_p$. 
\end{remark}

The following holds.
\begin{lemma}\label{lemma_seq}
If $\tilde{x}$ is a root of $f$ in $\F_q$, then
\begin{enumerate}
\item $\tilde{x}$ belongs to the level $r$ of the tree $T^q_{[\tilde{2}]} (x_0)$, for some non-negative integer $r$ and $x_0 \in \Pro (\F_q)$;
\item for any positive integer $j$, either any polynomial $g_{2j}$ or any polynomial $h_{2j}$ has a root $\tilde{x}_j$ belonging to the level $r+j$ of the tree $T^{q^{2^i}}_{[\tilde{2}]} (x_0)$ for some non-negative integer $i$.
\end{enumerate}
\end{lemma}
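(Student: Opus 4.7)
Part (1) is essentially immediate from the structure of $G^q_{[\tilde{2}]}$ recalled just after Lemma \ref{counterimages}: since $[\tilde{2}]$ is defined over $\F_p$ and $\tilde{x}\in\F_q$, the element $\tilde{x}$ lies in exactly one connected component of $G^q_{[\tilde{2}]}$, which contains a unique cycle whose vertices are roots of reversed trees. Taking $x_0$ to be the root of the reversed tree containing $\tilde{x}$ and $r$ to be the height of $\tilde{x}$ above $x_0$ finishes this part.

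For Part (2) I would proceed by induction on $j$, whose key ingredient is the following \emph{surjectivity fact}: if $\tilde{f}\in\F_p[x]$ is monic irreducible and $\tilde{g}$ is any $\F_p$-irreducible factor of $\tilde{f}^{Q_k}$ (respectively $\tilde{f}^{\hat{Q}_k}$), then $\theta_k$ (respectively $\hat{\theta}_k$) maps the set of roots of $\tilde{g}$ surjectively onto the set of roots of $\tilde{f}$. Indeed, since $\theta_k$ and $\hat{\theta}_k$ are rational over $\F_p$, they commute with the $\F_p$-Frobenius $\sigma$, so the image is a nonempty $\sigma$-stable subset of the roots of $\tilde{f}$, which must be the whole set by irreducibility of $\tilde{f}$. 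Composing, $[\tilde{2}]=\hat{\theta}_k\circ\theta_k$ sends the roots of $g_{2j}$ surjectively onto the roots of $g_{2(j-1)}$ for every $j\geq 1$ (via the intermediate factor $g_{2j-1}$), and analogously for the $h$-sequence.

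With this in hand the induction is routine. For the base case $j=1$ the surjectivity fact yields a root $\tilde{x}_1$ of $g_2$ with $[\tilde{2}](\tilde{x}_1)=\tilde{x}$; since $\deg g_2$ divides $4n$, there is an $i_1\geq 0$ with $\tilde{x}_1\in\F_{q^{2^{i_1}}}$, and then $\tilde{x}_1$ sits at level $r+1$ of $T^{q^{2^{i_1}}}_{[\tilde{2}]}(x_0)$. The inductive step pulls back a root $\tilde{x}_j$ of $g_{2j}$ at level $r+j$ via $[\tilde{2}]$ to a root $\tilde{x}_{j+1}$ of $g_{2(j+1)}$ at level $r+j+1$ in $T^{q^{2^{i_{j+1}}}}_{[\tilde{2}]}(x_0)$ for some $i_{j+1}\geq i_j$. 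The whole argument is symmetric in $g$ and $h$, so the disjunction in the statement is satisfied (in fact both halves hold).

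The only delicate step I foresee is the surjectivity fact above; once it is in place, what remains is bookkeeping on field extensions and tree heights, and the $\F_p$-rationality of $\theta_k$ and $\hat{\theta}_k$ is the mechanism that ensures we remain in the tree rooted at the original $x_0$ rather than accidentally drifting to a Frobenius conjugate.
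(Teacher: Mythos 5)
Your part (1) and your ``surjectivity fact'' are both correct, and the latter usefully makes explicit a step that the paper's proof leaves implicit: that an irreducible factor $g_2$ of $(f^{\hat{Q}_k})^{Q_k}$ must have \emph{some} $[\tilde{2}]$-preimage of $\tilde{x}$ among its roots does require the Frobenius-equivariance argument you give. The gap is in your base case. From $[\tilde{2}](\tilde{x}_1)=\tilde{x}$ you conclude that $\tilde{x}_1$ sits at level $r+1$ of $T^{q^{2^{i_1}}}_{[\tilde{2}]}(x_0)$, but this can fail when $r=0$, i.e.\ when $\tilde{x}=x_0$ is $[\tilde{2}]$-periodic: exactly one of the (at most four) preimages of a periodic point is itself periodic and lies on the cycle, at level $0$, not at level $1$ of the tree. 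Your surjectivity fact only guarantees that $g_2$ has \emph{some} preimage of $\tilde{x}$ as a root, and nothing prevents that root from being the periodic one --- for instance $g_2$ may be a degree-$n$ factor all of whose roots lie in $\F_q$ on the cycle, in which case the $g$-sequence never climbs the tree at all. This possibility is exactly why the lemma is phrased as a disjunction between the $g$- and $h$-sequences, and your closing parenthetical ``in fact both halves hold'' is false in general.

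The paper closes this gap by noting that at most one of the preimages of $\tilde{x}$ is $[\tilde{2}]$-periodic, so that at least one of $g_2$, $h_2$ acquires a \emph{non}-periodic preimage as a root (distinct irreducible factors have disjoint root sets, and when $g_2=h_2$ is forced one checks the periodic preimage, having degree $n$ over $\F_p$, cannot be a root of it); one then relabels so that this polynomial is $g_2$. Your inductive step is fine as written, since for $j\geq 1$ the element $\tilde{x}_j$ lies at level $r+j\geq 1$, hence is not periodic and all of its preimages do lie one level higher; only the base case needs the periodicity discussion.
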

\begin{proof}
We prove separately the two assertions.
\begin{enumerate}[leftmargin=*]
\item The assertion holds because any element in $\Pro(\F_q)$ is either $[\tilde{2}]$-periodic or preperiodic. In the former case $x_0 = \tilde{x}$, while in the latter case some iterate of $\tilde{x}$ is $[\tilde{2}]$-periodic and we set $x_0$ equal to the first of such iterates which is  $[\tilde{2}]$-periodic.
\item The assertion can be proved by induction on $j$. 

First we define $\tilde{g} := g_0^{\hat{Q}_k}$. We notice that $g_2$ and $h_2$ are factors of $\tilde{g}^{Q_k}$. Since $[\tilde{2}] = \hat{\theta}_k \circ \theta_k$, the (at most) $4$ preimages of $\tilde{x}$ with respect to the map $[\tilde{2}]$ in $\Pro(\overline{\F}_q)$ are roots of $\tilde{g}^{Q_k}$. Moreover, at most one of the preimages is $[\tilde{2}]$-periodic. Therefore, without loss of generality, we can suppose that $g_2$ has a root which is not $[\tilde{2}]$-periodic. If we denote by $\tilde{x}_1$ such a root, then the base step is proved.

As regards the inductive step, suppose that $g_{2j}$ has a root $\tilde{x}_j$ belonging to the level $r+j$ of the tree $T^{q^{2^i}}_{[\tilde{2}]} (x_0)$ for some positive integers $i$ and $j$. Using the same argument as above, we can define $\tilde{g} := g_{2j}^{\hat{Q}_k}$ and notice that $g_{2j+2}$ is a factor of $\tilde{g}^{Q_k}$. Therefore, the preimages of $\tilde{x_j}$ in $T^{q^{2^{i+1}}}_{[\tilde{2}]} (x_0)$ are roots of $\tilde{g}^{Q_k}$. One of the preimages, which we denote by $\tilde{x}_{j+1}$, is a root of $g_{2j+2}$ and the inductive step is proved.
\end{enumerate}
\end{proof}

We discuss the ordinary and the supersingular case separately.

\subsection{Ordinary case: $p \equiv 1 \pmod{4}$}
Suppose that $\pi_p^{2n} - 1 = \rho_0^{e_1} \cdot \rho_1$, for some positive integer $e_1$ and some element $\rho_1 \in R$ coprime to $\rho_0$. The following holds.

\begin{theorem}\label{fin_thm_ord}
There exists a positive integer $t \leq \left\lceil \dfrac{e_1}{2} \right\rceil$ such that at least one of the following holds:
\begin{itemize}
\item $g_{t+2j-1}$ and $g_{t+2j}$ have degree $2^{1+j} \cdot n$ for any integer $j \geq 1$;
\item $h_{t+2j-1}$ and $h_{t+2j}$ have degree $2^{1+j} \cdot n$ for any integer $j \geq 1$.
\end{itemize}
\end{theorem}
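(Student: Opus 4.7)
The plan is to combine Lemma~\ref{lemma_seq}, which locates roots of $g_{2j}$ and $h_{2j}$ inside the trees $T^{q^{2^i}}_{[\tilde{2}]}(x_0)$, with the depth control of Theorem~\ref{t_l+1} and Lemma~\ref{l_l+1}, which together give depth $d_i := \lceil e_1/2 \rceil + (i-1)$ for $T^{q^{2^i}}_{[\tilde{2}]}(x_0)$ when $i \geq 1$. First, by Lemma~\ref{lemma_seq}(1), the root $\tilde{x}$ of $f$ sits at some level $r$ of $T^q_{[\tilde{2}]}(x_0)$ for a $[\tilde{2}]$-periodic $x_0 \in \Pro(\F_q)$, and since $T^q \subseteq T^{q^2}$ we have $r \leq d_1 := \lceil e_1/2 \rceil$. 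By Lemma~\ref{lemma_seq}(2) and a WLOG relabeling (choosing factors so as to preserve the deep root), one of the sequences, say $\{g_{2j}\}$, carries a root $\tilde{x}_j$ of $g_{2j}$ at level $r+j$ of $T^{q^{2^{i(j)}}}_{[\tilde{2}]}(x_0)$. As soon as $r + j > d_1$, iterating Theorem~\ref{t_l+1}(2) forces $i(j) = r + j - d_1 + 1$, so that $\tilde{x}_j \in \F_{q^{2^{i(j)}}} \setminus \F_{q^{2^{i(j)-1}}}$.

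Next, because $\deg g_0 = n$ and each application of $\hat{Q}_k$ or $Q_k$ either preserves or doubles the degree, one has $\deg g_i = 2^{a(i)} n$ for some non-negative integer $a(i)$; matching the $2$-adic valuation of $\deg g_{2j} = [\F_p(\tilde{x}_j):\F_p]$ against the field-of-definition constraint forces $a(2j) = i(j)$ in the deep regime, giving $\deg g_{2j} = 2^{i(j)} n$. For the odd-indexed polynomial $g_{2j-1}$, a natural root is $\theta_k(\tilde{x}_j)$, since $\hat{\theta}_k(\theta_k(\tilde{x}_j)) = [\tilde{2}](\tilde{x}_j) = \tilde{x}_{j-1}$ shows it is a $\hat{\theta}_k$-preimage of a root of $g_{2j-2}$; moreover $\theta_k(\tilde{x}_j) \in \F_{q^{2^{i(j)}}}$ because $\theta_k$ is algebraic, and in the deep regime the level analysis rules out a drop to $\F_{q^{2^{i(j)-1}}}$, yielding $\deg g_{2j-1} = 2^{i(j)} n$ as well. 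Setting $t$ to be the shift aligning $g_{t+2j}$ with the first $g_{2J}$ living at level $d_1 + j$ produces the claimed pattern $\deg g_{t+2j-1} = \deg g_{t+2j} = 2^{1+j} n$ for $j \geq 1$.

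The hardest part will be to bound the transition index $t$ by $\lceil e_1/2 \rceil$ and to handle the few steps between the preperiodic start and the deep regime. During this window the $\hat{Q}_k$- and $Q_k$-transforms may stall: whenever the quadratic $\beta^2 - 4k \tilde{x}_{j-1} \beta - 4k^2 = 0$ (defining $\hat{\theta}_k^{-1}$) or $k \alpha^2 - \beta \alpha + k = 0$ (defining $\theta_k^{-1}$) splits in the current field, the degree of the corresponding factor fails to double. One must track the up-to-four preimages at each $[\tilde{2}]$-step simultaneously in both $g$- and $h$-sequences, and show that across them the deep branch is reached within $d_1 - r$ pairs of transforms; the ``at least one of $g$ or $h$'' clause in the statement is what absorbs any stall in one sequence by letting the other continue to descend, yielding the bound $t \leq \lceil e_1/2 \rceil$.
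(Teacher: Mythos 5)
Your proposal follows essentially the same route as the paper's proof: use Lemma~\ref{lemma_seq} to place a root of $g_{2j}$ (or $h_{2j}$) at level $r+j$ of a tree rooted at a $[\tilde{2}]$-periodic $x_0$, use Theorem~\ref{t_l+1} to control the depths of the trees $T^{q^{2^i}}_{[\tilde{2}]}(x_0)$ and the fact that children of leaves are again leaves one field extension up, and then read the degrees off the fields of definition from a transition index $t$ onward. Your bookkeeping is in fact more explicit than the paper's: the identification $\deg g_{2j}=2^{i(j)}n$ via the minimal $i$ with $\tilde{x}_j\in\F_{q^{2^{i}}}$, and the treatment of the odd-indexed polynomials through the intermediate root $\theta_k(\tilde{x}_j)$, are points the paper's proof passes over in silence, and they are handled correctly.

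The gap is in your last paragraph: the bound $t\leq\lceil e_1/2\rceil$, which is the only quantitative content of the statement, is not proved there. You describe what ``one must track'' and ``show'', but no argument is given; in particular the claim that the ``at least one of $g$ or $h$'' clause absorbs stalls is exactly the content of Lemma~\ref{lemma_seq}(2) and does not by itself bound $t$. The paper obtains the bound by defining $t$ as the smallest even index $2j$ at which the tracked root passes from $\F_{q^2}$ to $\F_{q^4}$, and invoking Theorem~\ref{t_l+1}(1): the root stays inside $T^{q^{2}}_{[\tilde{2}]}(x_0)$, whose depth is $\lceil e_1/2\rceil$ and whose leaves have height at least $\lceil e_1/2\rceil-1$, so the escape is forced once the level $r+j$ reaches that depth. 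Note, however, that since the level increases by one for every \emph{two} transforms, this count gives $j\leq\lceil e_1/2\rceil-r$ and hence $t=2j\leq 2(\lceil e_1/2\rceil-r)$, which does not immediately yield $t\leq\lceil e_1/2\rceil$; so your instinct that this is the delicate step is sound, and the paper itself is terse exactly here. As written, though, your proposal does not close this step, so the stated bound on $t$ remains unestablished.
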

\begin{proof}
Adopting the notations of Section \ref{prel_ord}, let $m=n$, $q=p^m$ and $l=0$. In accordance with Lemma \ref{lemma_seq}(2), we can say without loss of generality that, for any positive integer $j$, any polynomial $g_{2j}$ has a root belonging to the level $r+j$ of the tree $T^{q^{2^i}}_{[\tilde{2}]} (x_0)$, for some $x_0 \in \Pro (\F_q)$ and for some non-negative integer $i$. According to Theorem \ref{t_l+1}(1), the tree $T^{q^{2}}_{[\tilde{2}]} (x_0)$ has depth $\left\lceil \dfrac{e_1}{2} \right\rceil$. Let $t$ be the smallest index $2j$ such that $g_{2j}$ has a root in $\F_{q^2}$, while $g_{2j+2}$ has a root in $\F_{q^{2^2}}$. The result follows because the degree of $g_{t+2j}$ is twice the degree of $g_{t+2(j-1)}$ for any integer $j \geq 1$ and the result follows according to Theorem \ref{t_l+1}(2).
\end{proof}

\subsection{Supersingular case: $p \equiv 3 \pmod{4}$}
Suppose that $p \equiv 3 \pmod{4}$ and that $(-p)^{n} - 1 = 2^{e_0} \cdot r$, for some  integers $e_0$ and $r$. The following holds.

\begin{theorem}
There exists a positive integer $t \leq e_0$ such that at least one of the following holds:
\begin{itemize}
\item $g_{t+2j-1}$ and $g_{t+2j}$ have degree $2^{1+j} \cdot n$ for any integer $j \geq 1$;
\item $h_{t+2j-1}$ and $h_{t+2j}$ have degree $2^{1+j} \cdot n$ for any integer $j \geq 1$.
\end{itemize}
\end{theorem}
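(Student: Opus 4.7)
The plan is to imitate the proof of Theorem \ref{fin_thm_ord} almost verbatim, replacing the invocations of Theorem \ref{t_l+1} with those of its supersingular counterpart Theorem \ref{t_l+1_super}. Set $m=n$ and $q=p^m$. By Lemma \ref{lemma_seq}(2), we may assume, without loss of generality, that for every positive integer $j$ the polynomial $g_{2j}$ has a root at level $r+j$ of some tree $T^{q^{2^i}}_{[\tilde{2}]}(x_0)$, where $x_0 \in \Pro(\F_q)$ is $[\tilde{2}]$-periodic and $i$ is a non-negative integer depending on $j$.

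Applying Theorem \ref{t_l+1_super}(1) with $i=1$, the tree $T^{q^{2}}_{[\tilde{2}]}(x_0)$ has depth $e_0$, so the levels $r+j$ of the roots of $g_{2j}$ must eventually exceed $e_0$. Let $t = 2j_0$ be the smallest index such that $g_{t}$ has a root in $\F_{q^{2}}$ while $g_{t+2}$ has a root in $\F_{q^{2^2}}\setminus \F_{q^{2}}$; exactly as in the ordinary argument this forces $t \leq e_0$. At this transition step $\deg g_{t+2} = 2\deg g_{t}$, by the remark following the definition of the sequences.

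To propagate this single doubling to all subsequent pairs of indices, the key ingredient is Theorem \ref{t_l+1_super}(2): the children of the leaves of $T^{q^{2^i}}_{[\tilde{2}]}(x_0)$ are leaves of $T^{q^{2^{i+1}}}_{[\tilde{2}]}(x_0)$. An induction on $j \geq 1$ then shows that the root of $g_{t+2j}$ lies in $\F_{q^{2^{j+1}}}\setminus \F_{q^{2^{j}}}$, whence $\deg g_{t+2j} = 2^{1+j}\cdot n$. The corresponding statement for the intermediate polynomial $g_{t+2j-1}$ follows by combining the aforementioned remark (a transform can either preserve or double the degree) with the already-established doubling between $g_{t+2j-2}$ and $g_{t+2j}$. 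The main subtlety, exactly as in the ordinary case, is ruling out the possibility that the sequence stalls inside a fixed quadratic extension after having escaped into a strictly larger one, which is precisely the content of Theorem \ref{t_l+1_super}(2).
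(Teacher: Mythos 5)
Your proposal is correct and follows essentially the same route as the paper: the paper's own proof of this theorem is literally the instruction to repeat the argument of Theorem \ref{fin_thm_ord} with Theorem \ref{t_l+1_super} in place of Theorem \ref{t_l+1}, which is exactly what you do (depth $e_0$ of $T^{q^2}_{[\tilde{2}]}(x_0)$ bounding $t$, then part (2) of Theorem \ref{t_l+1_super} propagating the doubling). Your expansion is, if anything, slightly more explicit than the paper about the transition index and the odd-indexed polynomials.
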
 
\begin{proof}
The current theorem can be proved as Theorem \ref{fin_thm_ord} relying upon Theorem \ref{t_l+1_super} and Lemma \ref{lemma_seq}.
\end{proof}

\bibliography{Refs}
\end{document}